\newcommand{\fat}[1]{\mathds{#1}}
\newcommand{\RR}{\fat{R}}
\newcommand{\RRplus}{\RR_{_{\geq 0}}}
\newcommand{\antip}[1]{\mathcal{A}^{#1}}
\newcommand{\exposed}[1]{{\partial #1}}
\theoremstyle{plain}
\newtheorem{theorem}{Theorem}
\newtheorem{proposition}[theorem]{Proposition}
\newtheorem{lemma}[theorem]{Lemma}
\newtheorem{corollary}[theorem]{Corollary}
\theoremstyle{definition}
\newtheorem{definition}[theorem]{Definition}
\author{Jared Culbertson}
\address{Sensors Directorate, Air Force Research Laboratory, 2241 Avionics Circle, Bldg. 620,
Wright--Patterson Air Force Base, Ohio 45433-7302, USA.}
\email{jared.culbertson@us.af.mil}
\author{Dan P. Guralnik}
\address{Electrical \& Systems Engineering Dept., University of Pennsylvania, 200 S. 33rd St., 203 Moore Bldg.
Philadelphia, Pennsylvania 19104-6314, USA.}
\email{guraldan@seas.upenn.edu}
\author{Peter F. Stiller}
\address{Department of Mathematics, MS3368, Texas A\&M University,
College Station, Texas 77843-3368, USA.}
\email{stiller@math.tamu.edu}
\begin{document}

\title{Edge erasures and chordal graphs}

\begin{abstract}
We prove several results about chordal graphs and weighted chordal graphs by focusing on exposed edges. These are edges that are properly contained in a single maximal complete subgraph.  This leads to a characterization of chordal graphs via deletions of a sequence of exposed edges from a complete graph.   Most interesting is that in this context the connected components of the edge-induced subgraph of exposed edges are $2$-edge connected.  We use this latter fact in the weighted case to give a modified version of Kruskal's second algorithm for finding a minimum spanning tree in a weighted chordal graph.  This modified algorithm benefits from being local in an important sense.
\end{abstract}
\bigskip

\maketitle

\noindent {\bf Keywords:}
chordal graphs, exposed edges, edge erasures, minimum spanning trees, weighted graphs, Kruskal's algorithm\\

\noindent {\bf 2010 MSC:} 05C22, 05C75, 68R10 (primary), 57Q10, 51K05, 62H30 (secondary)

\begin{figure}[t]
\begin{center}
	\includegraphics[width=\textwidth]{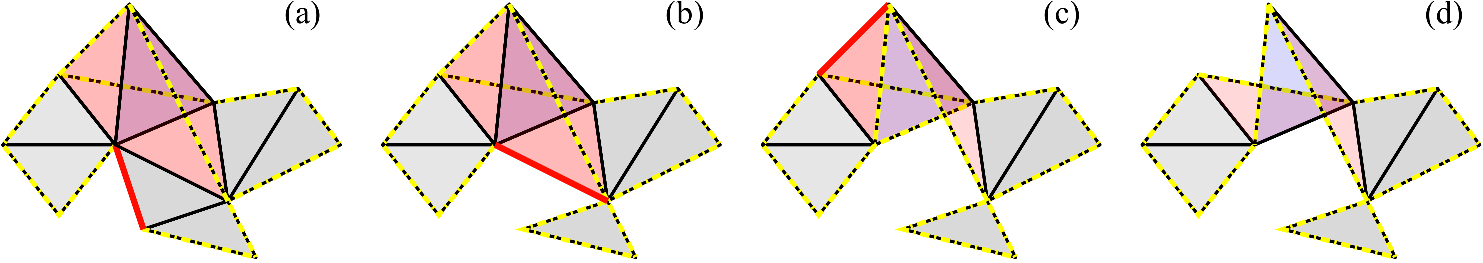}
\end{center}
    \caption{\scriptsize A sequence of three erasures of exposed edges, left to right, performed on a chordal graph (cliques illustrated as simplices for emphasis). Exposed edges being deleted are marked in red; the other exposed edges are marked with yellow dashes. Transitions (a$\to$b$\to$c) introduce new exposed edges; transition (c$\to$d) turns an exposed edge into an unexposed one. This effect is even more pronounced with larger cliques. Finally, note how all transitions are marked with stark changes to the topology of the edge-induced subgraph of exposed edges.   \normalsize\label{fig:erasures}}
\end{figure}

\section{Introduction}

In this short paper we prove several results about chordal graphs by focusing on edges which are each properly contained in a unique maximal complete subgraph; these we call exposed edges.
Our first result gives a characterization of chordal graphs as those that can be produced through a sequence of exposed edge deletions starting from a complete graph.
This characterization does not follow immediately from the usual vertex-centric characterizations of chordal graphs in terms of elimination orderings or minimal separators, and is also distinct from the important representation of chordal graphs as intersection graphs of a family of subtrees of a tree (see \cite{decaria:thesis} for a survey of these).
The edge deletions that we consider are different from the edge-without-vertex elimination orderings of related graph classes (for example, see the characterization of strongly orderable graphs in \cite{dragan:strongly_orderable}).

At first glance, one might think that exposed edges could be added to or removed from a chordal graph {\em en masse} while maintaining chordality.
%
However, the edge-induced subgraph of exposed edges in a chordal graph {\em can change dramatically through a single deletion of an exposed edge}, turning exposed edges into unexposed ones, and vice-versa (see \Cref{fig:erasures}).
Moreover,
in the reverse direction, care must be taken when adding an edge to a chordal graph to ensure that the edge is both exposed in the larger graph and that the graph remains chordal. 

Despite the unruly behavior of the edge-induced subgraph of exposed edges, we are able to circumvent the problems this creates by proving that its connected components are always $2$-edge connected. This, in turn, leads to a third result, namely a variation of Kruskal's second algorithm~\cite{kruskal:mst} for finding a minimum spanning tree in a weighted graph, with its attached relationship to ultrametrics and single-linkage clustering. This relationship is discussed in more detail in \Cref{sec:d_erasures}.

Our early investigations were motivated by theoretical work on data clustering (see \cite{CGP-projection}) and a search for an adequate notion of a minimum spanning complex for our $\antip{n}$ clustering methods, analogous to the role played by minimum spanning trees for single-linkage clustering. The results in this paper directly apply to the topological study of flag complexes obtainable by collapses from a simplex.  In fact, the language is interchangeable since the flag condition means the abstract simplicial complex is completely determined by its $1$-skeleton.  We have chosen the graph theoretical language for a more consistent presentation, but all the results can be restated topologically in terms of chordal complexes, which are flag complexes whose $1$-skeleton is a chordal graph. For example, our process of deleting an exposed edge produces a simple strong deformation retraction of the associated chordal complex. For more on this topological perspective and the relationship to simplicial collapses, see the final section of the paper. 

\section{Exposed edges in chordal graphs: erasures and edge connectivity}

We begin by collecting some basic definitions, notation, and terminology.

\begin{definition}
Let $G = (V, E)$ be an undirected simple graph (with no loops or multiple edges) having finite vertex set $V$ and edge set $E$. The degree of a vertex $v$ will be denoted by $\deg_G(v)$. The open $G$-neighborhood of a vertex $v \in V$ is 
\[
	N_G(v) = \{w \in V\setminus\{v\} \mid vw \in E\}.
\]
The closed neighborhood $N_G[v] = N_G(v) \cup \{v\}$. We will denote the induced subgraph on $A \subseteq V$ by $G[A]$. On occasion, we will simplify notation by understanding $N_G(v)$ or $N_G[v]$ to be the induced subgraph $G[N_G(v)]$ or $G[N_G[v]]$. Whether we are referring to the induced subgraph or just the vertex set will be clear from the context. In particular, complete subgraphs will occasionally be referred to as cliques. Note $G[N_G(v)]$ is sometimes called the {\em link} of $v$, particularly in a more topological setting. 

If $v_1, \ldots, v_k$ is an ordering on $V$, let $G_i = G[\{v_i, \ldots, v_k\}]$. A vertex $v$ is {\em simplicial} if the induced subgraph on $N_G[v]$ is complete. We say that a graph has a {\em perfect elimination ordering} if there is some ordering of $V$ such that $v_i$ is simplicial in $G_i$ for each $1 \leq i \leq k$. Recall also that a {\em bridge} is a cut-edge, that is, an edge whose removal increases the number of connected components of the graph. 
\end{definition}

\begin{definition}
An undirected simple graph $G$ is {\em chordal} if every induced cycle has length three. Chordality is an induced-hereditary property.  
\end{definition}

There are many characterizations of chordal graphs available in the literature. We will not attempt here to give a full survey of the relevant results, but rather point the reader to \cite{decaria:thesis}, which provides an excellent guide to the related literature. However, there is one characterization that we will need in the sequel and one implication---we combine those as a theorem here. 

\begin{theorem}[\cite{dirac:rigid_circuit,fulkerson-gross:incidence}]
\label{thm:chordal}
A graph is chordal if and only if it has a perfect elimination ordering. Moreover, any chordal graph is either complete or has two non-adjacent simplicial vertices.
\end{theorem}

Borrowing from topology, and to simplify the exposition, we refer to an edge whose endpoints induce a two-element maximal clique as a {\em facet edge}. The following lemma, however, states that for chordal graphs, the notions of bridge and facet edge are equivalent; although this is not true for an arbitrary graph (a non-bridge facet edge is in an induced cycle of length at least four). 

\begin{lemma}\label[lemma]{lem:bridge}
Let $G$ be a graph. If an edge $xy \in G$ is a bridge, then it is a facet edge. Additionally, if $G$ is chordal, then the converse holds.
\end{lemma}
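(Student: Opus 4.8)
The plan is to handle the two implications separately, first translating both hypotheses into their most usable form. Throughout I will use that $xy$ being a \emph{facet edge} is equivalent to $\{x,y\}$ being a clique with no common neighbor of $x$ and $y$ (otherwise such a neighbor $z$ would enlarge the clique to $\{x,y,z\}$), and that $xy$ being a \emph{bridge} is equivalent, by the characterization recalled in the opening definitions, to $xy$ lying in no cycle of $G$. With these reformulations, the first implication will hold for arbitrary $G$, while the converse is where chordality does the work.

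For the first implication (bridge $\THEN$ facet edge) I would argue by contrapositive. Suppose $xy$ is not a facet edge. Since $xy \in E$ already makes $\{x,y\}$ a clique, failure of maximality produces a vertex $z$ adjacent to both $x$ and $y$. Then $x, y, z, x$ is a triangle, so $xy$ lies in a cycle and is therefore not a bridge. This step requires no assumption on $G$.

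For the converse I would assume $G$ is chordal and $xy$ is a facet edge, and suppose toward a contradiction that $xy$ is not a bridge, so $xy$ lies in some cycle. Choosing a \emph{shortest} cycle $C$ through $xy$, I would show $C$ must be a triangle: if $C$ had length at least $4$, then by chordality it carries a chord, and since $xy$ joins two consecutive vertices of $C$ the chord is distinct from $xy$; the chord then splits $C$ into two strictly shorter cycles, exactly one of which still contains the edge $xy$, contradicting minimality of $C$. Hence $C$ is a triangle $x, y, z, x$, which exhibits a common neighbor $z$ of $x$ and $y$---contradicting that $\{x,y\}$ is a maximal clique, and completing the argument.

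The routine bookkeeping is the chord-splitting step: writing $C$ as $v_0 = x, v_1 = y, v_2, \dots, v_{m-1}$ with $m \geq 4$, a chord $v_i v_j$ on non-consecutive indices determines two arcs of $C$ whose edge counts are each at least $2$ and sum to $m$; appending the chord to each arc yields two cycles of length at most $m-1$, and the cycle edge $v_0 v_1 = xy$ belongs to precisely one of the two arcs. The only real subtlety---and the step I would be most careful about---is confirming that the resulting shorter cycle genuinely still passes through $xy$ and that the chord cannot coincide with $xy$; both follow from $xy$ being an edge of $C$ between consecutive vertices while any chord joins non-consecutive ones. Everything else is a direct application of the definition of chordality.
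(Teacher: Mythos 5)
Your proposal is correct and follows essentially the same route as the paper: the forward implication via the observation that a larger clique containing $xy$ would place $xy$ on a cycle, and the converse by taking a cycle through $xy$ and using chordality to shorten it to a triangle, contradicting maximality of $\{x,y\}$. You simply spell out the chord-splitting bookkeeping that the paper leaves implicit.
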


\begin{definition} 
Let $G$ be a graph. An edge $xy \in G$ is said to be {\em exposed}, if $xy$ is contained in a unique maximal clique and $xy$ is {\bf not} a facet edge. We will denote the edge-induced subgraph of exposed edges of $G$ by $\exposed G$.
\end{definition}

\begin{definition} Suppose $G,H$ are graphs with the same vertex set $V$. We say that $H$ is obtained from $G$ through an {\em edge erasure}, if $G$ contains an exposed edge $e$ such that $H=G-e$.
\end{definition}

The topological nature of an erasure, which can be described in terms of a strong deformation retraction, will be discussed in \Cref{sec:topology}. We now provide a useful characterization of exposed edges. 

\begin{lemma}
\label[lemma]{lem:neighborhoods}
An edge $vw \in \exposed G$ if and only if $N_G(v) \cap N_G(w)$ is a nonempty clique in $G$. 
\end{lemma}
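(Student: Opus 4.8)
The plan is to let $S \defeq N_G(v)\cap N_G(w)$ denote the common neighborhood and to show that the set $\{v,w\}\cup S$ is the unique maximal clique containing $vw$ precisely when $S$ is a nonempty clique. The governing observation, used in both directions, is that a vertex $u\neq v,w$ lies in \emph{some} clique containing the edge $vw$ if and only if $u$ is adjacent to both $v$ and $w$, that is, $u\in S$. Everything else is unwinding the definitions of facet edge and exposed edge.

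First I would prove the forward implication. Suppose $vw$ is exposed, so it lies in a unique maximal clique $K$ and is not a facet edge. Since $\{v,w\}$ is not a maximal clique, $K$ must contain at least one further vertex, which is then adjacent to both $v$ and $w$; hence $S\neq\emptyset$. For the clique condition, take any $u\in S$. Then $\{v,w,u\}$ is a triangle, and any maximal clique extending it must equal $K$ by uniqueness; in particular $u\in K$. Thus $S\subseteq K\minus\{v,w\}$, and since $K$ is a clique every subset of it is a clique, so $S$ is a nonempty clique.

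For the converse, suppose $S$ is a nonempty clique and set $K\defeq\{v,w\}\cup S$. Nonemptiness of $S$ provides a common neighbor $u$, so $\{v,w,u\}$ is a clique strictly larger than $\{v,w\}$ and $vw$ is not a facet edge. I claim $K$ is a clique: $v$ and $w$ are adjacent, each vertex of $S$ is adjacent to both $v$ and $w$ by definition of $S$, and $S$ is a clique by hypothesis. To finish, let $K'$ be any maximal clique containing $vw$. Every vertex $u\in K'\minus\{v,w\}$ is adjacent to both $v$ and $w$, so $u\in S$ and thus $K'\subseteq K$; since $K$ is itself a clique and $K'$ is maximal, $K'=K$. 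Hence $K$ is the unique maximal clique containing $vw$, and $vw$ is exposed.

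The main obstacle is precisely this uniqueness step in the converse: it is exactly here that the hypothesis that $S$ be a clique, and not merely nonempty, is indispensable. If the common neighbors of $v$ and $w$ failed to be mutually adjacent, the edge $vw$ could lie in several distinct maximal cliques; the clique condition on $S$ is what forces all of them to collapse onto $\{v,w\}\cup S$.
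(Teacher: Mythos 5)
Your proof is correct and rests on the same key observation as the paper's (which merely notes that $N_G(v)\cap N_G(w)$ is the union of all maximal cliques containing both $v$ and $w$, minus $\{v,w\}$, and leaves the rest as "straightforward"). You have simply written out in full the unwinding that the paper omits.
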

\begin{proof}
We remark that for any two vertices $v, w \in G$, the intersection
$N_G(v) \cap N_G(w)$ is just the union of all maximal cliques which contain both $v$ and $w$, minus $\{v,w\}$. The result follows immediately from this observation and the definitions.
\end{proof}

The previous lemma highlights that our notion of an exposed edge is weaker than that of a simplicial edge~\cite{dragan:strongly_orderable}, where the intersection is replaced by the union of the neighborhoods. Indeed, it follows from the lemma that an edge $vw$ is exposed if and only if $w$ is a non-isolated simplicial vertex of $G[N_G(v)]$, and vice versa.

\begin{theorem}
\label{thm:chordal_erasure}
A graph $H$ can be obtained from a complete graph through a sequence of erasures of exposed edges if and only if $H$ is a connected chordal graph. Throughout the erasure process each graph in the sequence remains a connected chordal graph. 
\end{theorem}
\begin{proof}
First, we can see that erasures from connected chordal graphs produce connected chordal graphs as follows. Suppose $H = G - xy$, with $G$ a connected chordal graph and $xy \in \exposed G$. If $C$ is an induced cycle in $H$ such that $\{x,y\}\nsubseteq C$ ({\em i.e.,} possibly containing $x$ or $y$, but not both), then $C$ is also an induced cycle of $G$ and so of length $3$. Otherwise, suppose $\{x, y\} \subset C$ and $|C| > 3$. Note that if $|C| > 4$, then the induced subgraph $C' = C + xy$ of $G$ has an induced cycle of length greater than $3$, a contradiction. This leaves us with the case where $C = xv_1yv_2x$ for some $v_1, v_2$. Since $xy$ is exposed in $G$, we must have $v_1v_2 \in G$, otherwise $xy$ would lie in two distinct maximal cliques and $xy$ would not be exposed. However, $v_1v_2 \in G$ (hence in $H$) means that $C$ would not be an induced cycle in $H$, a contradiction. As for connectedness, it is easy to see that an erasure does not disconnect a connected graph since by definition a bridge is not an exposed edge. 

Conversely, it suffices to show that for any non-complete connected chordal graph $G$, we can add an edge $e$ such that $e \in \exposed{(G+e)}$ with $G+e$ chordal. (Merely ensuring $e \in \exposed{(G+e)}$ does not guarantee that $G+e$ is chordal.) Given such a  $G$, suppose $v_1, \ldots, v_k$ is a perfect elimination ordering for $G$. Let $1 \leq \ell \leq k$ be the smallest index such that $G_{i}$ is complete for $i > \ell$. Then there is some $j > \ell$ with $v_{\ell}v_j \notin G$, because $G_{\ell}$ is not complete, but $G_{\ell + 1}$ is. 

Let us show that $e:={v_\ell}v_j$ is the edge we are looking for. Setting $G' = G + e$, we claim that $v_1, \ldots, v_k$ is also a perfect elimination ordering for $G'$. This will demonstrate that $G'$ is chordal, by \Cref{thm:chordal}.

For $i < \ell$, the neighbors of $v_i$ in $G'$ are just the same neighbors of $v_i$ in $G$, and $N_{G_i}[v_i]$ is a clique since $v_i$ is simplicial in $G_i$. In particular, $\{v_j, v_\ell\} \nsubseteq N_{G}(v_i)$ since $e \notin G$. Thus $v_i$ is also simplicial in $G_i'$. On the other hand, for $i > \ell$, $G_i$ (and hence $G_i'$) is complete and so every vertex is simplicial. We still need to check that $v_{\ell}$ is simplicial in $G'_{\ell}$. This follows from the fact that $v_jv_n \in G'_{\ell}$ for all $n > \ell$ since $G'_{\ell + 1}$ is complete. 

It remains to show that $e \in \exposed G'$. It is convenient to use the characterization of exposed edges given in \Cref{lem:neighborhoods}. Notice again that for $i < \ell$, we must have that $\{v_j, v_{\ell}\} \nsubseteq N_{G'}(v_i)$, since as noted above, $\{v_j, v_\ell\} \nsubseteq N_{G}(v_i)$. Hence 
\[
	N_{G'}(v_j) \cap N_{G'}(v_{\ell}) = N_{G'_{\ell}}(v_{\ell}) \setminus \{v_j\},
\]
which is a clique, as shown above, because $v_{\ell}$ is simplicial in $G'_{\ell}$.
\end{proof}

It is natural to ask whether one could retain this result while replacing the class of exposed edges with a different one. This is easily answered by noticing that the removal of a non-exposed edge either disconnects the graph (in the case of a facet edge) or results in an induced $4$-cycle.  

\Cref{thm:chordal_erasure} is similar in spirit to the result of Spinrad and Sritharan~\cite{ss:weakly_triangulated} showing that weakly chordal graphs can be recognized by the possibility of successively adding edges through the two-pair construction to arrive at a complete graph. 

\bigskip
The following observations can be derived directly from the definitions and will be useful below:
\begin{lemma}\label[lemma]{lem:crucial_observations} Let $G$ be a graph.
If $S\subset E$ is a set of facet edges, then $\exposed G = \exposed{(G-S)}$.
If $v$ is a simplicial vertex with $\deg_G(v)>1$, then every edge $e$ incident with $v$ is exposed.
\end{lemma}

\begin{proposition}
\label[proposition]{prop:tree}
Any connected chordal graph $G$ can be reduced through a sequence of erasures to a tree. 
\end{proposition}
\begin{proof} By \Cref{thm:chordal_erasure}, it suffices to verify that if $G$ is not a tree, then $G$ contains an exposed edge. Let $G'$ be the subgraph of $G$ obtained by removing all facet edges and let $G_0'$ be a connected component of $G'$ which is not a single isolated vertex. By \Cref{thm:chordal} and \Cref{lem:crucial_observations}, $G_0'$ has exposed edges, and they are also exposed in $G$.
\end{proof}

\begin{lemma}
\label[lemma]{lem:exposed}
Let $G$ be a chordal graph and $v$ a vertex in $G$ that is in a maximal clique of size at least three. Then $v$ is incident with at least two exposed edges. 
\end{lemma}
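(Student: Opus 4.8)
The plan is to work entirely inside the link $L = G[N_G(v)]$ and to exploit the characterization recorded immediately after \Cref{lem:neighborhoods}: an edge $vw$ is exposed if and only if $w$ is a non-isolated simplicial vertex of $L$. This turns the whole statement into a purely local problem, namely showing that $L$ contains at least two non-isolated simplicial vertices. Since chordality is induced-hereditary, $L$ is itself chordal, so \Cref{thm:chordal} is available to analyze it.

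First I would note that $L$ has at least one edge. Indeed, $v$ lies in a maximal clique $K$ with $|K| \geq 3$, so $K \setminus \{v\}$ is a clique of size at least two contained in $N_G(v)$; any two of its vertices form an edge of $L$. The temptation now is to apply \Cref{thm:chordal} directly to $L$ and read off two non-adjacent simplicial vertices. This is precisely the step I expect to be the main obstacle, because those two vertices need not be non-isolated: an isolated vertex is vacuously simplicial, and a graph such as a triangle together with some isolated points would let \Cref{thm:chordal} hand back useless (isolated) vertices. The fix is to localize: let $C$ be the connected component of $L$ containing the edge found above. Then $C$ is connected with at least two vertices, so no vertex of $C$ is isolated, and $C$ is again chordal.

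Applying \Cref{thm:chordal} to $C$ then splits into two cases. If $C$ is complete, every one of its (at least two) vertices is simplicial in $C$; if $C$ is not complete, it has two non-adjacent simplicial vertices. In either case $C$ contains at least two simplicial vertices, each of which is non-isolated (since $C$ is connected with $|C| \geq 2$). The final ingredient is that simpliciality transfers from the component to the whole link: because $C$ is a connected component of $L$, we have $N_L(c) = N_C(c)$ for every $c \in C$, so a vertex simplicial in $C$ remains simplicial in $L$. Hence $L$ has at least two non-isolated simplicial vertices, and by the characterization above the corresponding two edges joining them to $v$ are exposed, completing the argument.
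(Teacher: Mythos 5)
Your proposal is correct and follows essentially the same route as the paper: pass to the link $N_G(v)$, restrict to a connected component containing an edge, extract two simplicial vertices there via \Cref{thm:chordal}, and translate back to exposed edges at $v$ via \Cref{lem:neighborhoods}. You are somewhat more explicit than the paper about the complete-component case and about why the vertices obtained are non-isolated, but these are refinements of the same argument, not a different one.
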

\begin{proof}
Since $v$ is in a maximal clique of size at least three, the induced subgraph $N_G(v)$ is chordal and not edgeless. Thus there is some connected component of $N_G(v)$ containing an edge, and so we can use \Cref{thm:chordal} to find two simplicial vertices $v_1, v_2$ in $N_G(v)$ in that component.  But this implies that $N_G(v) \cap N_G(v_i)$ is a (non-empty) clique for $i = 1,2$, and so $vv_1$ and $vv_2$ are exposed edges in $G$. Note that an isolated vertex $u$ in $N_G(v)$ would correspond to a facet edge $uv$ in $G$.
\end{proof}

\begin{theorem}
\label{prop:cycles}
If $G$ is a chordal graph, then every connected component of $\exposed G$ is $2$-edge connected. 
\end{theorem}
\begin{proof}
It is simple to check that the theorem holds when $G$ is either complete or has no more than four vertices.
Now suppose that $G$ is a counterexample with a minimal number of vertices (so $|G| \geq 5$).

First, we claim that $G$ has no facet edges.
By \Cref{lem:bridge}, it suffices to verify that $G$ is bridgeless. 
Suppose $e$ were a bridge (and hence not exposed in $G$).
Applying \Cref{lem:crucial_observations} with $S=\{e\}$, we have that $\exposed G = \exposed{(G-e)}$.
Now, the connected components of $G - e$ each have fewer vertices than $G$, implying that all exposed edges in $G-e$ occur in cycles. 
It follows that $G$ could not have been a counter-example---a contradiction.

Since $G$ is chordal, \Cref{thm:chordal} allows us to find non-adjacent simplicial vertices $u,v \in G$. Then $G - u$ has fewer vertices and so every exposed edge of $G-u$ is contained in a cycle of exposed edges in $G-u$. Notice that for vertices $x,y \in G-u$, we have that 
\[
	N_{G-u}(x) \cap N_{G-u}(y) = \left[N_G(x) \cap N_G(y)\right] \setminus \{u\}.
\]
and so using \Cref{lem:neighborhoods} we see that if $\{x,y\} \nsubseteq N_G[u]$, then $xy$ is exposed in $G-u$ if and only if $xy$ is exposed in $G$. Thus in this case, if $xy$ is exposed in $G$ we can find a cycle $C = xyv_1\cdots v_kx \subset \exposed{(G-u)}$. If none of the edges in $C$ are in $N_G(u)$, then $C \subset \exposed G$. However, if some edges of $C$ are contained in $N_G(u)$, then let $i$ be the smallest index with $v_{i}v_{i+1}$ in $N_G(u)$. Similarly, let $j$ be the largest index with $v_{j-1}v_{j}$ in $N_G(u)$. Notice that $j > i$, but we could have $j = i + 1$ if there is a single edge of $C$ in $N_G(u)$.  (In order to make the notation consistent, we are treating $v_0$ as $y$ and $v_{k+1}$ as $x$.) Since $G$ has no facet edges, we may apply \Cref{lem:crucial_observations} to see that the cycle $C' = xyv_1\cdots v_{i-1}v_iuv_jv_{j+1}\ldots v_kx$ is a cycle of exposed edges in $G$ containing $xy$. 

The remaining case to be considered is when $\{x,y\} \subset N_G[u]$. Here $\{x,y\} \nsubseteq N_G(v)$, since $xy \in\exposed G$ and $u$ and $v$ are not adjacent. Hence, by the same reasoning as before, we can find a cycle of exposed edges in $G$ containing $xy$ by modifying a cycle of exposed edges in $G-v$ containing $xy$.
\end{proof}

\section{Weighted chordal graphs and $w$-erasures}\label{sec:d_erasures}

We turn now to an application of these results in the setting of edge-weighted finite graphs, and show a connection with single-linkage clustering through minimum spanning trees. 

\begin{definition} Let $(G,w)$ be an edge-weighted graph, with $w\colon E\to\RRplus$, and let $H$ be a subgraph with the induced weight. We say that $H$ is obtained from $G$ through a {\em $w$-erasure}, if $H=G-e$  where $e\in \exposed G$ with $w(e) \geq w(e')$ for any exposed edge $e'$ of $G$.
\end{definition}

Observe that given any sequence $G_0,G_1, \ldots, G_m$ of graphs obtained through erasures of exposed edges $e_0, e_1,\ldots, e_{m-1}$, we can define a weighting $w$ of $G_0$ such that $G_0,G_1, \ldots, G_m$ is also a sequence of $w$-erasures. Also, recall that a minimum spanning tree for a connected weighted graph $G$ is a spanning tree which minimizes the sum of the weights over the edges of the tree. 

\begin{theorem}\label{thm:mst} Let $(G,w)$ be a weighted, connected chordal graph. If $G'$ is obtained from $G$ through a $w$-erasure, then $G'$ contains a minimum spanning tree of $(G,w)$.
\end{theorem}
\begin{proof} Let $xy \in \exposed G$ and $G' = G - xy$ be obtained by a $w$-erasure. First, recall that $G'$ is connected since any bridge in $G$ is not exposed and hence not removed in a $w$-erasure. It is also clear that the theorem holds whenever $|V(G)| \leq 3$. 

Let $T$ be a minimum spanning tree of $(G,w)$. The case of concern is when $xy \in T$. Then let $T_x,T_y$ denote the connected components of $x$ and $y$, respectively, in $T-xy$. Let $F$ denote the set of all edges $uv\in G$  with $u\in T_x$ and $v\in T_y$, excluding the edge $xy$. Since $G'$ is connected, $F$ intersects $G'$. For any $uv\in F$, the graph $T':=T-xy+uv$ is a spanning tree of $G$, implying $w_{uv}\geq w_{xy}$, by minimality of $T$. 

On the other hand, if $uv \in \exposed G$, then $w_{uv} \leq w_{xy}$ and so any exposed edge in $F$ has equal weight with $xy$. Now we can appeal to \Cref{prop:cycles} to see that $xy$ is contained in a cycle of exposed edges of $G$ which necessarily intersects $F$, say at $uv$. Thus $T' = T -xy + uv$ is another minimum spanning tree contained of $(G,w)$ in $G'$.
\end{proof}

In his seminal paper on minimum spanning trees~\cite{kruskal:mst}, Kruskal proposed two algorithms for computing such a tree. The second of which proceeds as follows: starting with the complete graph $G_0=K_n$ endowed with the weight $w$, for each $i\geq 0$ remove from $G_i$ a heaviest edge (that is, one whose $w$-value is maximal) among those not separating the current graph to obtain $G_{i+1}$. The process terminates after stage $t=\binom{n-1}{2}$ with $G_{t+1}$ a tree. Using the cut property of minimum spanning trees, it is easy to argue that every minimum spanning tree of $(K_n, w)$ may be obtained in this way. The preceding theorem then allows us to show that, surprisingly, when restricting this algorithm to only exposed edges, we are nonetheless able to recover all minimum spanning trees. 

\begin{figure}[t]
\begin{center}
	\includegraphics[width=\textwidth]{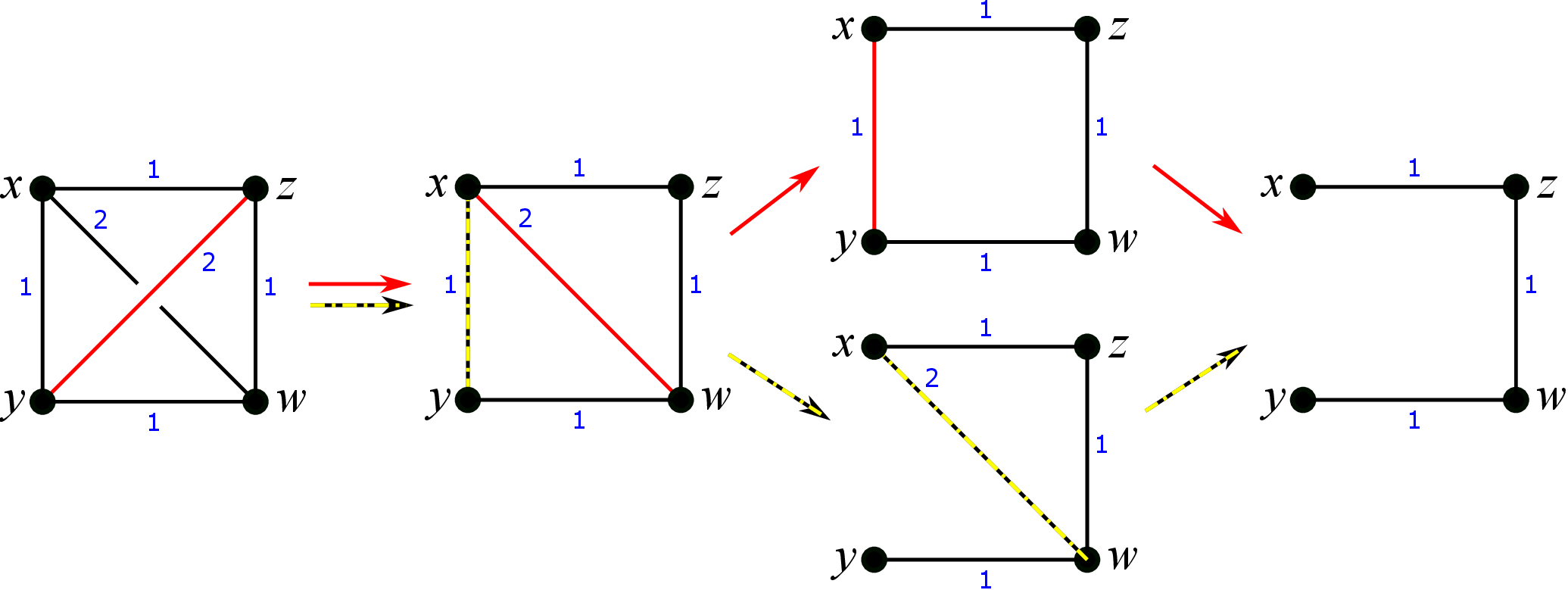}
\end{center}
    \caption{Example of a heaviest edge that is not exposed: the ``$\ell_1$ square'' drawn on the left is a weighted $K_4$ with two edges of weight $2$, both of them exposed (initially all edges are). Performing a single $w$-erasure (bottom, yellow dashed edges) results in one diameter becoming unexposed, with all the remaining edges being shorter. Proceeding with Kruskal's original algorithm (top, red edges) results in a four cycle before a minimum spanning tree is obtained, while $w$-erasures maintain chordality throughout.\label{fig:unexposed_diam}}
\end{figure}
\noindent
In so doing, we have replaced a global eligibility criterion, namely checking non-separation for a heaviest edge, with a local condition: checking whether a heaviest edge satisfies \Cref{lem:neighborhoods}. Some differences between the two algorithms are illustrated in Figure~\ref{fig:unexposed_diam}.

\begin{corollary}
Consider a weighted complete graph $(K_n,w)$, for example, the weighted graph associated with a finite metric space. Then a maximal sequence of $w$-erasures produces a minimum spanning tree for $(K_n, w)$. Any minimum spanning tree for $w$ can be obtained in this way. 
\end{corollary}
\begin{proof}
By induction, the first statement is a direct consequence of \Cref{prop:tree} and \Cref{thm:mst}. For the second, let us start with a given minimal spanning tree $T$ for $w$, and a sequence $G_0, \ldots, G_{k}$ of graphs obtained by erasure, with $G_0=K_n$ and $G_{i}$ containing $T$ for each $0 \leq i \leq k$. If $G_k \neq T$, then for any exposed edge $xy$ in $T$ of maximal weight (among the exposed edges of $G_k$), the same reasoning as in the proof of \Cref{thm:mst} (and using the same notation), shows that there must be another exposed edge $uv$ in $G_k$ with equal weight as $xy$ and $u \in T_x$, $v \in T_y$ (in particular, $uv \notin T$). Thus we can extend the sequence by setting $G_{k+1} = G_k - uv$.
\end{proof}

\section{Connections to the topological viewpoint}
\label{sec:topology}
Topologically, we can view the characterization of chordality given in \Cref{thm:chordal} in terms of perfect elimination orderings as providing  the basis for realizing chordal graphs as the $1$-dimensional skeleta of simplicial flag complexes assembled through successive ``coning-off' of existing simplices; or (by reversing the perspective) of simplicial flag complexes which admit an exceedingly tame kind of strong-deformation retraction to a vertex through a sequence of ``vertex-collapses.'' Put in the language of simple homotopy theory (see, e.g.~\cite{Kozlov-combinatorial_alg_top}, Definition 6.13 and the ensuing discussion), erasing a simplicial vertex $w$ of a chordal graph $G$ is realized in the polyhedron $|K|$ of the subtended complex $K$ as the straight-line homotopy from the identity mapping of $|K|$ to the (realization of the) simplicial map $K\to \mathrm{sd}(K)$. This homotopy fixes all vertices of $K-w$ and maps $w$ to the barycenter of its opposing face in $K$, which is the face subtended by the collection of the neighbors of $w$ in $G$, see Figure~\ref{fig:collapses}(left). 

\begin{figure}[ht]
	\includegraphics[width=\textwidth]{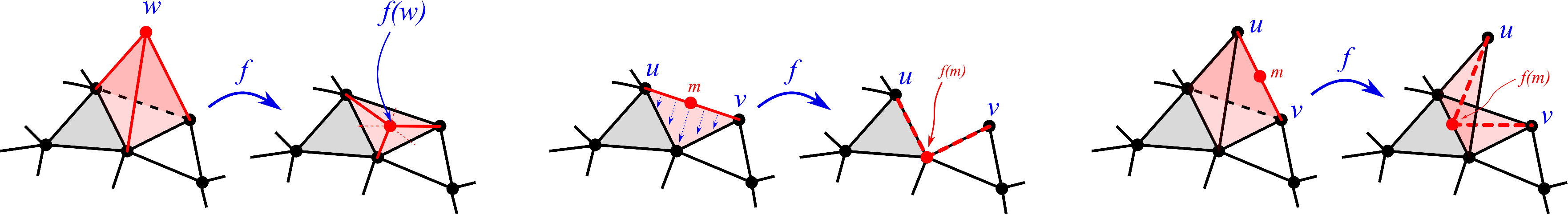}
    \caption{Collapsing an `exposed' vertex $w$ in a 3-facet (left); an exposed edge $uv$ in a 2-facet (center); and an exposed edge in a 3-facet (right). \label{fig:collapses}}
\end{figure}

Using the same language, the erasure process described in this paper can be understood as another restricted type of strong deformation retraction characterized, at the level of one-dimensional skeleta, by the removal of exposed edges. Indeed, at the level of the complexes, it quickly becomes evident that erasing an arbitrary edge of $G_i$ to obtain $G_{i+1}$ (as required by Kruskal's algorithm) does not guarantee a strong deformation retraction of $K_i=K_{G_i}$ onto $K_{i+1}$, unless the edge being removed is exposed. Here $K_i$ is the flag complex with $1$-skeleton $G_i$ and an exposed edge is one that is properly contained in a unique maximal simplex of $K_i$. Then it is possible to eliminate the edge by ``pressing in'' in the form of an {\em edge-collapse}, see Definition 6.13 in~\cite{Kozlov-combinatorial_alg_top}  and Figure~\ref{fig:collapses}(center,right). Homotopy equivalences of this kind have been studied by combinatorial algebraic topologists since the introduction of the notions of collapsibility  and simple homotopy types by Whitehead~\cite{Whitehead-collapses,Whitehead-simple_homotopy} (also see ~\cite{Kozlov-combinatorial_alg_top}, Chapter 6, for an overview and more modern treatment). Our results, then, provide an understanding of chordal graphs as $1$-skeleta of connected flag complexes arising as strong deformation retractions of a simplex, providing an interpretation of chordality from the standpoint of extendibility.

We close by briefly noting that this approach could be generalized by considering simplicial complexes other than the simplex as starting points, or {\em ambient complexes}, for the erasure process. For example, an interesting replacement would be the standard triangulation of the $n$-cube induced by its isomorphism with the Hasse diagram of the inclusion order in a power set. The corresponding question, then, is to identify which families of complexes/graphs might be characterized as emerging from some ambient complex $S$ by excavating them out of $S$ via repeated application of a restricted family of collapses, subject to a suitable stopping condition. 

\section{Acknowledgements} The authors gratefully acknowledge the support of Air Force Office of Science Research under the LRIR 15RYCOR153, MURI FA9550-10-1-0567 and FA9550-11-10223 grants, respectively.

\bibliographystyle{plain}
\bibliography{chordal.bib}

\end{document}